\documentclass[12pt,reqno]{amsart}
\usepackage{amssymb}
\usepackage{amsmath}

\usepackage{amscd}

\newcommand{\RNum}[1]{\uppercase\expandafter{\romannumeral #1\relax}}

\usepackage[T2A]{fontenc}
\usepackage[utf8]{inputenc}
\usepackage[russian,english]{babel}
\input{int.def}

\usepackage{tikz-cd}
\usetikzlibrary{cd}
\usepackage{dirtytalk}
\usepackage{hyperref}
\usepackage{xcolor}
\usepackage{centernot}

\usepackage{enumitem}

\numberwithin{equation}{section}

\parindent = 0pt

\newcommand{\Poincare}{\text{Poincar\'e }}

\usepackage[mathcal]{euscript}

\usepackage{titlesec}
\titleformat{\section}[runin]{\bfseries}{\thesection.}{3pt}{}[.]

\myitemmargin 
\baselineskip =15.0pt plus 2.5pt
\usepackage{geometry}
\newgeometry{vmargin={25mm}, hmargin={18mm,18mm}, footskip=10mm}   

\begin{document}

\title[On Lagrangian tori in K3 surfaces]%
{On Lagrangian tori in K3 surfaces}

\author{Gleb Smirnov}




\begin{abstract}
Every Maslov-zero Lagrangian torus in a K3 surface has non-trivial homology class. 
This note aims to extend this result to 
Lagrangian tori with Maslov indices congruent to zero modulo 4. Conversely, we show 
that every homologically non-trivial Lagrangian torus is necessarily Maslov-zero.
\end{abstract}

\maketitle

\setcounter{section}{0}
\section{Main result}\label{main}
Let \((X, \omega)\) be a smooth symplectic 4-manifold that is homotopy-equivalent to a complex K3 surface, with the condition that \(c_1(X) = 0\). Let $L \subset X$ be an embedded Lagrangian torus. Choose an $\omega$-compatible almost-complex structure $J$ on $(X,\omega)$. Since $L$ is totally real, we have the following natural isomorphisms:
\begin{equation}\label{iso}
T_{X}|_{L} \cong T_{L} \otimes \cc,\quad 
\Lambda^2_{\cc} (T_{X}) |_{L} \cong \Lambda^2_{\rr} (T_{L}) \otimes \cc,
\end{equation}
and the latter bundle carries a canonical section since $L$ is orientable. On the other hand, $c_1(X) = 0$ implies that $\Lambda^2_{\cc} (T_{X})$ has a global section, which is unique up to homotopy since $H^1(X;\zz) = 0$. We thus obtain 
two nowhere vanishing sections of $\Lambda^2_{\cc} (T_{X})$ over $L$. They differ by a gauge map $L \to \cc^{*}$, whose class in $H^1(L;\zz)$ we denote by $\alpha$. The class $\mu = 2 \alpha$ is 
referred to as the Maslov class of $L$. For the sake of our discussion, it is more convenient to work with the class $\alpha$. If $\alpha$ vanishes, then $L$ is called a Maslov-zero torus. 
\smallskip%

If $(X,\omega)$ is a K{\"a}hler K3 surface, then every Maslov-zero Lagrangian 
torus $L \subset (X,\omega)$ has non-trivial homology class. Sheridan and Smith (see Theorem 1.4 in \cite{Sher-Smith}) proved this result for specific K{\"a}hler symplectic forms on $X$, using advanced methods of homological mirror symmetry. The theorem's generalization to arbitrary K{\"a}hler forms is due to Entov and Verbitsky, as found in Theorem 1.1 in \cite{Ent-Verbit-2}. Their proof reduces the general case to one where the Sheridan-Smith theorem is applicable. This note introduces further generalizations of these results. Specifically, we prove the following: 
\begin{theorem}\label{t:A}
Let $(X,\omega)$ be a symplectic 
4-manifold homotopy-equivalent to a complex K3 surface,  and suppose $c_1(X) = 0$. Let $L \subset (X,\omega)$ be an embedded Lagrangian torus. Then:
\begin{enumerate}[label={(\arabic*)}]
    \item If $L$ is homologically non-trivial, then $\alpha$ is zero.   
\smallskip%

    \item If $L$ is nullhomologous modulo 2, then $\alpha$ 
    is non-zero modulo 2. 
\end{enumerate}
Consequently, if the homology class of $L$ is non-trivial, its modulo 2 reduction is also non-trivial.
\end{theorem}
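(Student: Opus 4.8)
The plan is to argue by Seiberg--Witten theory, exploiting the rigidity of the gauge-theoretic invariants of the K3 surface. Two facts about $(X,\omega)$ are forced by $X$ being a K3: first, every Lagrangian torus $L$ satisfies $[\omega]\cdot[L]=0$ and $[L]^{2}=0$, so $[L]$ is an $\omega$-isotropic square-zero class; second, by Taubes' theorem --- $b^{+}(X)=3$, $H^{*}(X;\zz)$ torsion-free and even, $c_{1}(X)=0$ --- the canonical $\Spin^{c}$ structure $\frs_{0}$ (the one with $c_{1}(\frs_{0})=0$) is the unique $\Spin^{c}$ structure on $X$ with $\sw_{X}(\frs_{0})\neq 0$, and $\sw_{X}(\frs_{0})=\pm1$; the same rigidity persists for the $\zz/2$-refined and for the one-parameter family Seiberg--Witten (and Bauer--Furuta) invariants of $X$. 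The strategy is to build, from $L$, closed symplectic $4$-manifolds by a torus surgery, to compute their (possibly family) Seiberg--Witten invariants from the single basic class above, and to observe that the answer stays ``K3-like'' precisely when the homological and Maslov constraints of the theorem hold, and is impossible otherwise.

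The role of the Maslov class is this. By its definition, $\alpha$ is the obstruction, over $L$, to matching the trivialisation of $\Lambda^{2}_{\cc}T_{X}$ coming from the global complex volume form (which exists since $c_{1}(X)=0$ and is unique up to homotopy since $H^{1}(X;\zz)=0$) with the one coming from the real volume form on $L$. Fixing a Weinstein neighbourhood $\nu(L)\cong(D^{*}T^{2},\omega_{\mathrm{can}})$, this says that $\alpha$ records how $\frs_{0}$ sits relative to the Lagrangian (cotangent) framing of $\nu(L)$; and $\alpha\bmod 2$ records the corresponding discrepancy between the induced $\Spin$ structures on $\d\nu(L)=T^{3}$. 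Since a Luttinger surgery along $L$ is carried out precisely with that Lagrangian framing, $\alpha$ --- respectively $\alpha\bmod 2$ --- is exactly the obstruction to $\frs_{0}$, respectively its $\Spin$ lift, being transported in the ``product'' way across the re-gluing. In parallel, $[L]$, and its reduction mod $2$, govern the first Betti number and $2$-torsion of the complement $X\setminus\nu(L)$, hence which $\Spin^{c}$ structures on the surgered manifold survive and which chamber it lies in.

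With this dictionary, the proof proceeds by performing torus (Luttinger) surgeries on $L$ along a basis of $H_{1}(L;\zz)$ and pushing the rigidity of $X$ through a Seiberg--Witten surgery/gluing formula (Morgan--Mrowka--Szab\'o, Fintushel--Stern; in the family form of Ruberman and of Baraglia--Konno once the surgered manifold ceases to be a homology K3 over $\zz$). If $[L]\neq0$ but $\alpha\neq0$, the output is a symplectic $4$-manifold with $b^{+}>1$ whose Seiberg--Witten data are incompatible with those of a symplectic K3 --- a contradiction, which is (1). If $[L]\equiv0\pmod2$ and $\alpha\equiv0\pmod2$, then for a suitable surgery coefficient one gets a symplectic $\zz/2$-homology K3 equipped with a $\Spin$ structure whose family Seiberg--Witten / Bauer--Furuta invariant must be non-zero but whose $\zz/2$ count is impossible for such a manifold --- a contradiction, which is (2). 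The last assertion is then formal: were $[L]$ non-zero over $\zz$ but zero mod $2$, part (1) would give $\alpha=0$, hence $\alpha\equiv0\pmod2$, contradicting part (2).

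The step I expect to be the main obstacle is the family Seiberg--Witten bookkeeping: pinning down that the $\Spin$/$\Spin^{c}$ datum transported across the surgery is governed by $\alpha$ and by $\alpha\bmod 2$ \emph{exactly} (not merely up to torsion), proving that the relevant family invariant of the K3 is non-zero, and arranging the surgery so that its output is genuinely a $\zz/2$-homology K3. It is precisely this passage from integral to $\bmod\ 2$ statements --- equivalently, from ``Maslov-zero'' to ``even Maslov'' --- that the homological-mirror-symmetry proof of Sheridan--Smith, and its extension by Entov--Verbitsky, does not detect, and that here must be supplied by $\zz/2$-refined gauge theory. (Should a direct surgery argument for part (1) or for the $\alpha=0$ case prove awkward, an alternative is to invoke a uniqueness-of-symplectic-structures statement for the K3 manifold to reduce those cases to the K\"ahler setting, where the Sheridan--Smith / Entov--Verbitsky theorem applies, and reserve the gauge theory for the genuinely new even-Maslov and mod-$2$ content.)
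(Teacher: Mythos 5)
Your overall skeleton (Luttinger surgery along loops in $L$, then Seiberg--Witten rigidity of the K3) does coincide with the paper's coarse strategy, but as written the plan has two essential gaps, and in both places the paper supplies a concrete ingredient that your sketch replaces by an unverified appeal to heavier machinery. For part (2), the entire difficulty is to show that \emph{some} Luttinger surgery on $L$ produces a manifold with $b_1=0$ and non-trivial $2$-torsion in $H^2$; this is exactly where the hypothesis ``$\alpha$ even'' enters, and it does so not through a spin-structure ``dictionary'' on $\partial\nu(L)\cong T^3$ but through the Rokhlin--Viro calculus: one needs a loop $\gamma\subset L$ with Viro index $V(\gamma)=1$, whose existence follows from the Freedman--Kirby congruence $\sigma(X)-[L]^2=8\,\mathrm{Arf}(X,L)$ mod $16$ for the characteristic surface $L$, combined with Nemirovski's relation $R(F,\sigma)=1+V(\gamma)$ and the identity $\#(F\cap\Sigma)=\alpha(\gamma)$. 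Nothing in your proposal produces such a loop, and without it the surgered manifold need not acquire the $2$-torsion you need. Once that torsion is present, the contradiction in the paper uses only ordinary Seiberg--Witten theory: Morgan--Szab\'o (on a spin rational homology K3, every spin$^{\cc}$ structure with trivial determinant is basic) against Taubes (for a symplectic form with $b_2^+\geq 3$, no class $c_1+2\varepsilon$ with $\varepsilon\cdot[\omega]\leq 0$ and $\varepsilon\neq 0$ is basic, so non-zero torsion classes cannot be basic). Your proposed substitute --- family Seiberg--Witten/Bauer--Furuta invariants with a gluing formula across the surgery, \`a la Ruberman and Baraglia--Konno --- is nowhere carried out (you yourself flag it as the main obstacle) and is not needed.

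For part (1), the claim that the surgered manifold has ``Seiberg--Witten data incompatible with a symplectic K3'' is not substantiated, and in fact that is not where the contradiction lies. After the surgery $X(L,\gamma,1)$ with $\alpha(\gamma)\neq 0$ one computes $c_1(\tilde X)=-\alpha(\gamma)[L]$ with $c_1(\tilde X)\cdot[\tilde\omega]=0$, and Taubes then forces $c_1(\tilde X)=0$ --- which is perfectly consistent with $\tilde X$ being a rational homology K3, so no gauge-theoretic ``incompatibility'' appears. The subtlety is that $c_1(\tilde X)=0$ only says $\alpha(\gamma)[L]=0$ in $H^2(\tilde X;\zz)$, where the dual class $[L]$ may have become torsion after surgery; to contradict $[L]\neq 0$ in $X$ the paper needs a further step: $\tilde X$ is then spin with intersection form $3H\oplus(-2E_8)$, one extracts three pairwise disjoint positive-square surfaces $C_1,C_2,C_3$ in $\tilde X-L\cong X-L$, and the configuration $[C_i]^2>0$, $[C_i]\cdot[C_j]=0$, $[C_i]\cdot[L]=0$, $[L]^2=0$, $[L]\neq 0$ is impossible in a lattice of signature $(3,19)$. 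This step has no counterpart in your plan. Note also that the surgery formula $c_1(\tilde X)=-k\,\alpha(\gamma)[L]$, which your ``dictionary'' takes for granted, is itself a lemma requiring proof. Finally, your fallback of invoking ``a uniqueness-of-symplectic-structures statement for the K3 manifold'' to reduce to the K\"ahler case is not available: no such uniqueness theorem is known, and the paper deliberately avoids any reduction to the Sheridan--Smith/Entov--Verbitsky results.
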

One can replace the assumptions that \((X,\omega)\) is homotopy-equivalent to a K3 surface and \(c_1(X) = 0\) with the single assumption that \(X\) is diffeomorphic to a K3 surface. Indeed, if \( (X, \omega) \) is a symplectic manifold diffeomorphic to a K3 surface, then \( c_1(X) = 0 \), regardless of the choice of \( \omega \). This is explained at the end of \S\,\ref{input}.
\smallskip%

Our proof builds on Nemirovski's work \cite{Nemir-1}, and it is independent of the work \cite{Sher-Smith}. In \S\,\ref{klein}, we extend this to Lagrangian Klein bottles under the same assumptions on $(X,\omega)$.

\statebf Acknowledgements. 
\ I thank Stefan Nemirovski for his comments on the manuscript. I am also grateful to the referee for their meticulous review, valuable feedback, and numerous suggestions, which have significantly improved this paper.

\section{Insights from Seiberg-Witten Theory}\label{input} 
We leverage two results from Seiberg-Witten theory for Theorem \ref{iso}. We briefly introduce Seiberg-Witten invariants, referring readers to \cite{Nic, wild, K-M} for details on four-dimensional gauge theory.
\smallskip%

Let $(X,g)$ be a smooth, closed 4-manifold with a Riemannian metric $g$, $b_1(X) = 0$, and 
$b_{2}^{+}(X) > 1$. We choose a spin$^\cc$ structure $\fr{s}$ on $X$, which gives rise to spinor bundles $W^\pm$ and determinant line bundle $\mathcal{L}$. Let \(\mathcal{A}\) denote the space of \(\UU(1)\)-connections on \(\mathcal{L}\), and let \(\mathcal{U}\) be the gauge group associated with \(\mathcal{L}\). 
\smallskip%

\(\mathcal{U}\) consists of smooth maps from \(X\) to \(\UU(1)\), forming a group since the target space is a group. When \(H^1(X; \mathbb{R}) = 0\), as is the case in this paper, each map \(u \in \mathcal{U}\) can be written as \(u = e^{-if}\) for some smooth function \(f \colon X \to \mathbb{R}\). The group \(\mathcal{U}\) acts on \(\Gamma(W^+) \times \mathcal{A}\) as follows: for \(u = e^{-if} \in \mathcal{U}\) and \((\varphi, A) \in \Gamma(W^+) \times \mathcal{A}\),  
\[
u \cdot (\varphi, A) = (e^{-if} \varphi, A + 2i \, df).
\]
If two connections \(A_1, A_2 \in \mathcal{A}\) are gauge equivalent, they differ by a closed 1-form, and so share the same curvature, i.e., \(F_{A_1} = F_{A_2}\). Since the curvature \(F_A\) is gauge-invariant (i.e., \(F_A = F_{u \cdot A}\) for all \(u \in \mathcal{U}\)), it follows that the self-dual part of the curvature, \(F_A^+\), is also gauge-invariant. This property is essential for the gauge-invariance of the Seiberg-Witten equations. 
\smallskip%

On the other hand, if \(H^1(X; \mathbb{R}) = 0\) and \(F_{A_1} = F_{A_2}\), then the difference \(A_1 - A_2\) is not only closed but also exact, meaning it can be expressed as \(2i \, df\). In this case, the gauge transformation \(e^{-if}\) will map \(A_1\) to \(A_2\). Thus, two connections are gauge-equivalent if and only if they have the same curvature.
\smallskip%

Let $\eta$ be a $g$-self-dual form on $X$. The Seiberg-Witten equations with perturbing term $\eta$ seek solutions $(\varphi, A)$ and are given by:
\begin{equation}\label{eq:sw}
 \begin{cases}
   \cald_{A} \varphi = 0, 
   \\
   F^{+}_{A}  = \sigma(\varphi) + i\,\eta,
 \end{cases}
\end{equation}
where $\cald_{A} \colon \Gamma(W^{+}) \to \Gamma(W^{-})$ is the Dirac operator, 
$\sigma(\varphi)$ is the squaring map, and $F^{+}_{A}$ 
is the self-dual part of the curvature of 
$A$. The moduli space of solutions, $M_{\fr{s}}$, is defined as:
\begin{multline}
M_{\fr{s}} = \left\{ (\varphi,A) \in \Gamma(W^{+}) \times \cala\ |\ 
\text{$(\varphi,A)$ is a solution to \eqref{eq:sw}}
\right\}/\sim,\\
\quad \text{$(\varphi,A) \sim (\varphi',A')$ if $u \cdot (\varphi',A') = (\varphi,A)$ for some $u \in \calu$.} 
\end{multline}
The moduli space $M_{\fr{s}}$ depends on both the metric $g$ and the form $\eta$. As shown in Corollary 3 in \cite{K-M}, $M_{\fr{s}}$ is always compact. For sufficiently generic choices of $g$ and $\eta$ (shown in Lemma 5 in \cite{K-M}), $M_{\fr{s}}$ becomes a smooth manifold of dimension:
\begin{equation}\label{dim-1}
d(\fr{s}) = \frac{1}{4}( c_1^2(\mathfrak{s}) - 
3 \operatorname{sign}(X) - 2 \chi(X) ),
\end{equation}
where $c_1(\mathfrak{s}) = c_1(\mathcal{L})$ is the Chern class of the determinant line bundle, $\chi(X)$ is the Euler characteristic of $X$, and $\operatorname{sign}(X) = b^{+}_{2}(X) - b^{-}_{2}(X)$ is the signature of $X$. 
\smallskip%

We now define a mod 2 version of the Seiberg-Witten invariant for spin$^\cc$ structures $\fr{s}$ with $d(\fr{s}) = 0$. When $g$ and $\eta$ are generic, $M_{\fr{s}}$ is zero-dimensional, consisting of a finite number of points. We define the Seiberg-Witten invariant, $\operatorname{SW}(\fr{s})$, as the number of points in $M_{\fr{s}}$, counted modulo 2:
\[
\operatorname{SW}(\fr{s}) = \operatorname{\#} M_{\fr{s}}\,\operatorname{mod}2.
\]
For $b^{+}_{2}(X) > 1$, \cite{K-M} proves that $\operatorname{SW}(\fr{s})$ is independent of the generic choices of $g$ and $\eta$. 
\smallskip%

Assume $X$ is also a symplectic manifold with a symplectic form $\omega$ and an $\omega$-compatible almost-complex structure $J$ such that $g(\cdot, \cdot) = \omega(\cdot, J\cdot)$. As shown in Ch.\,7 of \cite{wild}, every spin$^\cc$ structure on $X$ can be expressed as:
\begin{equation}\label{spin-eps}
W^{+} = L_{\varepsilon} \oplus \left( \Lambda^{0,2} \otimes L_{\varepsilon} \right),\quad 
W^{-} = \Lambda^{0,1} \otimes L_{\varepsilon},
\end{equation}
where $L_{\varepsilon}$ is the line bundle on $X$ with $c_1(L_{\varepsilon}) = \varepsilon \in H^2(X;\zz)$. We denote the spin$^\cc$ structure defined by \eqref{spin-eps} as $\fr{s}_{\varepsilon}$. Letting $K^{*}_X$ denote 
the anticanonical bundle of $X$, the 
determinant line bundle of $\fr{s}_{\varepsilon}$ is given 
by $\mathcal{L} = K^{*}_X \otimes L_{\varepsilon}^2$. 
Consequently, $c_1(\fr{s}_{\varepsilon}) = c_1(\fr{s}_{0}) + 2 \varepsilon$. In terms of $\varepsilon$, the dimension $d(\fr{s}_{\varepsilon})$ becomes:
\[
d(\fr{s}_{\varepsilon}) = c_1(X) \cdot \varepsilon + \varepsilon \cdot \varepsilon.
\]
This formula for \(d(\mathfrak{s}_{\varepsilon})\) is derived from \eqref{dim-1} using the Hirzebruch signature formula (also proved by Rokhlin, \cite{GM}),
\[
c_1(X)^2 = 3 \, \operatorname{sign}(X) + 2 \chi(X),
\]
and noting that \(c_1(\mathfrak{s}_0) = c_1(X)\).
\smallskip

Setting \(\varepsilon = 0\) gives \(d(\mathfrak{s}_{0}) = 0\), and similarly for \(\varepsilon = -c_1(X)\). A key result for our work is the following theorem by Taubes:
\begin{theorem}[Taubes, \cite{Taub-2}]\label{taubes}
Let $(X, \omega)$ be a closed symplectic manifold with $b_{2}^{+}(X) > 1$. Then:
\begin{enumerate}
    \item 
    $\operatorname{SW}(\fr{s}_{0}) = \operatorname{SW}(\fr{s}_{-c_1(X)}) = 1$.
    \smallskip%
    
    \item If $\operatorname{SW}(\fr{s}_{\varepsilon}) \neq 0$, then $\varepsilon$ must satisfy the inequality:
    \[
    \varepsilon \cdot [\omega] \geq 0,
    \]
with equality allowed only for 
$\varepsilon = 0$.
\end{enumerate}
Consequently, if $\varepsilon$ is a non-trivial 2-torsion class, then $\operatorname{SW}(\fr{s}_{\varepsilon}) = 0$.
\end{theorem}
\begin{proof}
For detailed proofs, consult the end-notes of Ch.\,10 in \cite{wild} or Theorem 3.3.29 in \cite{Nic}. \qed
\end{proof}
\medskip%

Let us additionally assume that $X$ satisfies the following topological conditions:
\begin{equation}\label{homology-K3}
b_1(X) = 0,\quad b^{+}_2(X) = 3,\quad b^{-}_2(X) = 19.
\end{equation}
We refer to such a manifold $X$ as a rational homology K3. This terminology arises because these Betti numbers match those of a K3 surface when considered with rational coefficients. The second key result for our analysis, due to Morgan and Szab{\'o}, can be found in \cite{Morg-Sz}, specifically in Theorem 1.1 and Remark 2.2.
\begin{theorem}[Morgan-Szab{\'o}, \cite{Morg-Sz}]\label{morgan}
Let $X$ be a spin 4-manifold and a rational homology K3. Then 
any spin$^\cc$ structure with trivial determinant line bundle has non-trivial Seiberg-Witten invariant.
\end{theorem}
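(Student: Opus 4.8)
The plan is to reduce the theorem to showing that for a single spin$^\cc$ structure $\frs$ with $c_1(\frs)=0$ one has $\sw(\frs)\ne 0$; in fact I will argue that $\sw(\frs)$ reduces to~$1$ modulo~$2$. The passage from this to the full statement is bookkeeping. Spin$^\cc$ structures on $X$ form a torsor over $H^2(X;\zz)$, and the assignment $\frs\mapsto c_1(\frs)$ intertwines this action with multiplication by~$2$ on $H^2(X;\zz)$; hence the spin$^\cc$ structures with trivial determinant form a torsor over the $2$-torsion subgroup $H^2(X;\zz)[2]$, and this torsor is non-empty precisely because $X$ is spin (a spin structure supplies one, with genuinely trivial determinant). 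Thus there are $\bigl|H^2(X;\zz)[2]\bigr|$ spin$^\cc$ structures with trivial determinant, and each becomes a basic class once its invariant is shown to be non-zero --- this is the asserted lower bound.

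For the count: since $b_1(X)=0$ and $b_2^+(X)=3\ge 2$, the invariant $\sw(\frs)$ does not depend on the choice of generic metric and generic self-dual perturbation (the reducible monopole is then absent, the wall of bad perturbations having codimension $b_2^+(X)=3$), and the expected dimension of the moduli space is $d(\frs)=\frac{1}{4}\bigl(c_1(\frs)^2-2\chi(X)-3\sigma(X)\bigr)=\frac{1}{4}\bigl(0-2\cdot 24-3\cdot(-16)\bigr)=0$. So, after fixing a homology orientation, $\sw(\frs)$ is a signed count of irreducible monopoles and $\sw(\frs)\bmod 2$ is the unsigned count. (When $X$ is the standard, or any \Kahler, K3 surface, $\frs$ is the canonical spin$^\cc$ structure and $\sw(\frs)=1$ by Witten's computation for \Kahler surfaces with $p_g>0$; the point of the theorem is to remove the \Kahler hypothesis.)

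The real work is to show the count is odd for an \emph{arbitrary} spin rational homology K3. Here one exploits that a spin structure promotes the $S^1$-symmetry of the monopole equations to the group $\mathrm{Pin}(2)=S^1\cup jS^1$, with $j$ acting through the quaternionic structures on the spinor bundles $S^\pm$. The reducible $(\text{flat},0)$ is the unique $\mathrm{Pin}(2)$-fixed configuration, and the linearisation of the equations there is the Dirac operator $D$, whose complex index is $\wh A(X)=-\sigma(X)/8=2$ and whose \emph{quaternionic} index is therefore~$1$, an odd number. Following Morgan--Szab\'o, I would pass to a $\mathrm{Pin}(2)$-equivariant finite-dimensional approximation of the monopole map in the spirit of Furuta: one obtains a $\mathrm{Pin}(2)$-map of representation spheres in which $\mathbb{H}$ enters with multiplicity $\pm 1$ (the quaternionic index of $D$) and the sign representation with multiplicity $b_2^+(X)=3$, and $\sw(\frs)\bmod 2$ is read off from it as a mod~$2$ degree. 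An equivariant obstruction/localisation argument at the $\mathrm{Pin}(2)$-fixed point then shows that the odd quaternionic index forces this degree to be $\equiv 1 \pmod 2$. (Closer to the original, one may instead analyse the Kuranishi model of the unperturbed moduli space at the singular reducible --- singular because $\ker D$ is $2$-complex-dimensional for a generic metric --- together with its $\mathrm{Pin}(2)$-action, or work with a cobordism of moduli spaces over a path of metrics.) Either way $\sw(\frs)\equiv 1\pmod 2$, and the theorem follows.

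The step I expect to be the main obstacle is precisely that equivariant analysis. The reducible is never an isolated transverse point of the unperturbed moduli space, and a generic transverse perturbation is incompatible with the $\mathrm{Pin}(2)$-symmetry, so one cannot simply ``perturb and count''; one must instead work equivariantly with the non-transverse equations and convert the parity of the quaternionic index of $D$ into the parity of the monopole count. Carrying this out carefully is the technical heart of \cite{Morg-Sz}; the manipulations in the first two paragraphs are formal.
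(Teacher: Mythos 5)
The paper gives no argument for this statement at all: it is quoted from Morgan--Szab\'o, and the paper's ``proof'' consists of the citation plus the remark that the simple-connectivity hypothesis of \cite{Morg-Sz} is never used, so the argument applies verbatim to a spin rational homology K3. Your proposal is, by your own admission, in the same position: the formal reductions in your first two paragraphs are correct (trivial-determinant spin$^\cc$ structures form a torsor over the $2$-torsion of $H^2(X;\zz)$ and are non-empty since $X$ is spin; $d(\frs)=\frac14(0-2\chi-3\sigma)=0$; $b_2^+=3\ge 2$ gives chamber-independence; $\sf{ind}_\cc D=-\sigma/8=2$, so the quaternionic index is $1$), and these are precisely the numerical inputs behind the paper's remark that only $b_1=0$, $\chi=24$, $\sigma=-16$ and the spin condition matter. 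But the decisive step --- converting the odd quaternionic index into oddness of the monopole count despite the failure of equivariant transversality at the reducible --- is exactly the content of \cite{Morg-Sz}, and you defer it to that reference rather than prove it. So there is a genuine gap if your text is read as a standalone proof, but it is the same gap the paper deliberately leaves by treating the result as quoted input; relative to the paper you are neither weaker nor doing anything incompatible.

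Two specific points you should tighten. First, for the counting statement you need the nonvanishing for \emph{every} spin$^\cc$ structure with trivial determinant, whereas your Pin$(2)$ symmetry is set up from a chosen spin structure; this is harmless because the Bockstein $H^1(X;\zz_2)\to H^2(X;\zz)$ surjects onto the $2$-torsion subgroup, so every trivial-determinant spin$^\cc$ structure is induced by some spin structure --- but this is worth saying explicitly, since when $b_1=0$ yet $H_1(X;\zz)$ has torsion (the situation the paper actually needs) this is where the extension beyond homotopy K3's has content. Second, the Pin$(2)$-equivariant finite-dimensional approximation you present as the main route is Furuta/Bauer--Furuta technology, not what Morgan and Szab\'o do; their argument is the one you mention parenthetically, a cobordism of perturbed moduli spaces over a path of perturbations interchanged by the charge-conjugation involution, with the reducible handled through its quaternionic Kuranishi model. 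Since you flag this alternative, the misattribution is minor, but as written the ``main line'' of your third paragraph sketches a different (later) proof than the one in the cited paper.
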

\begin{proof}
The proof is provided in \cite{Morg-Sz}. 
While the original theorem assumes $X$ to be a homotopy K3 (i.e., simply-connected), the proof holds for the general case as well. Besides \eqref{homology-K3}, the only required assumption is that the determinant line bundle of the spin$^\cc$ structure is trivial. \qed
\end{proof}
\smallskip%

Finally, we show how Taubes’ theorem implies that a symplectic manifold \((X,\omega)\) diffeomorphic to a K3 surface has \( c_1(X) = 0 \). Suppose, for contradiction, that \( c_1(X) \neq 0 \). Then \( c_1(X) \) cannot be a 2-torsion class, as no such 
classes exist in a K3 surface. By Taubes’ theorem, we have:
\[
\operatorname{SW}(\fr{s}_{0}) = 1,\quad \operatorname{SW}(\fr{s}_{-c_1(X)}) = 1.
\]
Here, the Chern class of \( \fr{s}_0 \) is \( c_1(X) \), and that of \( \fr{s}_{-c_1(X)} \) is \( -c_1(X) \). Given the absence of 2-torsion classes, \( X \) must then admit at least two distinct spin\(^\cc\) structures with non-zero Seiberg-Witten invariants. However, if \( X \) is diffeomorphic to a K3 surface, it can have only one such structure. For a proof, see the end notes of Ch.\,10 in \cite{wild}.

\section{Rokhlin and Viro numbers} 
The following material on the Viro index is well-known (see, e.g., \cite{Viro, Nets, Nemir-1}). Let $(X,J)$ be a simply-connected almost-complex 4-manifold, and let $L \subset X$ be an embedded totally real torus. We assume that the homology class of $L$ is zero modulo 2 in $X$, i.e., $[L] = 0 \in H_2(X; \mathbb{Z}_2)$.
\smallskip%
 
Consider a simple closed loop $\gamma$ on $L$, a non-vanishing vector field $\dot{\gamma}$ tangent to $\gamma$, and a non-vanishing normal vector field $\nu$ to $\gamma$ on $L$. Here, \say{normal} means $\nu$ is everywhere transverse to $\dot{\gamma}$. By pushing off $\gamma$ in the direction of the field $J \nu$, we obtain a loop $\gamma^{\#} \subset X - L$. The isotopy class of $\gamma^{\#}$ in $X - L$ is well-defined because any two non-vanishing normal vector fields on $\gamma$ are homotopic through non-vanishing vector fields on the tangent bundle of $L$ restricted to $\gamma$.
\smallskip%

The Viro index of $\gamma$ is defined as the linking number modulo 2:
\[
V(\gamma) = \operatorname{lk}(\gamma^{\#}, L) \in \zz_2.
\]
The linking number is defined as follows. Since $X$ is simply-connected, there exists an immersed disk $D$ that bounds $\gamma$ in $X$. We choose $D$ to be transverse to $L$. Then, the linking number is the number of intersection points between $D$ and $L$, modulo 2:
\[
\operatorname{lk}(\gamma^{\#}, L) = 
\operatorname{\#} (L \cap D)\,\operatorname{mod}2.
\]
If $D'$ is another disk bounding $\gamma$, then 
the union of $D$ and $D'$ forms an immersed sphere in $X$ denoted as $S$. The difference between the number of intersection points of $L$ with $D$ and $D'$ (modulo 2) is equal to the number of intersection points between $S$ and $L$ (also modulo 2). Since $L$ is nullhomologous modulo 2, the latter number is always even. Therefore, the Viro index is well-defined for vanishing homology classes $[L] \in H_2(X; \mathbb{Z}_2)$.
\smallskip%

We let $\alpha \in H^1(L; \mathbb{Z})$ to be half of the Maslov class of $L$.
\begin{lemma}\label{gamma-is}
Let $(X,J)$ be a simply-connected almost-complex 4-manifold homotopy-equivalent to a complex K3 surface, and suppose $c_1(X) = 0$. If $L$ is nullhomologous modulo 2 and the modulo 2 reduction of $\alpha$ vanishes, then there exists 
a simple loop $\gamma$ on $L$ such that $V(\gamma) = 1$. 
\end{lemma}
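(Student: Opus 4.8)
The plan is to produce the loop $\gamma$ by a surgery/cut-and-paste argument on the Viro index, exploiting the relation between the Viro index, the Maslov class, and the Rokhlin-type invariant of the ambient manifold. First I would recall the basic identity that underlies everything here: for a totally real torus $L$ with $[L]=0 \bmod 2$, the function $\gamma \mapsto V(\gamma)$ on simple loops is governed by a quadratic refinement on $H_1(L;\zz/2)$, with the associated bilinear form being the mod-$2$ intersection form on $L$, and with the diagonal twist recorded by the mod-$2$ reduction of the Maslov class $\alpha$. Concretely, one has, for disjoint simple loops $\gamma_1,\gamma_2$ with homology classes $a_1,a_2$,
\begin{equation*}
V(\gamma_1+\gamma_2) = V(\gamma_1) + V(\gamma_2) + a_1\cdot a_2 \pmod 2,
\end{equation*}
together with a formula expressing $V(\gamma) + \langle \alpha_2, a\rangle$ (where $\alpha_2$ is the mod-$2$ Maslov class) in terms of a quantity depending only on $(X,J)$ and the mod-$2$ homology of $L$ — essentially the Guillou–Marin/Rokhlin congruence evaluated on a surface bounding $\gamma^{\#}$. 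The point is that $V$, corrected by $\alpha_2$, becomes a \emph{linear} functional on $H_1(L;\zz/2)$, hence is represented by a class $\beta\in H^1(L;\zz/2)$; so $V(\gamma) = \langle \alpha_2 + \beta, a\rangle$ for every simple loop with class $a$.

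Given this, the lemma reduces to showing $\beta\neq 0$ when $\alpha_2 = 0$: for then we may pick a simple loop $\gamma$ whose class $a$ pairs nontrivially with $\beta$, and that loop has $V(\gamma)=1$. (One must check that every nonzero class in $H_1(L;\zz/2)$ is represented by a \emph{simple} loop on the torus, which is standard.) So the real content is identifying $\beta$ with an invariant of $(X,J)$ that is forced to be nonzero on a K3 surface. Here is where the Seiberg–Witten input enters. The class $\beta$ should be computable as the obstruction to extending the normal framing $J\nu$ across a surface in $X$ bounding the pushoff; by a Rokhlin-type argument this obstruction is detected by the divisibility/characteristic data of the spin$^\cc$ structures on $X$, and for the K3 manifold — which is spin with $2$-torsion in $H^2$ potentially nontrivial, and for which Theorem \ref{morgan} guarantees a rich family of basic classes with trivial determinant — this obstruction is nonzero precisely because, were $\beta=0$, one could build a characteristic surface computation contradicting the existence of the spin$^\cc$ structure with trivial determinant and non-trivial Seiberg–Witten invariant (equivalently, contradicting $c_1(X)=0$ being realized together with the mod-$2$ vanishing hypotheses).

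I expect the main obstacle to be exactly this last identification: pinning down $\beta$ in terms of Seiberg–Witten data and showing it cannot vanish. The Nemirovski argument in \cite{Nemir-1} handles the analogous step using the Rokhlin invariant of the ambient $4$-manifold directly; the novelty flagged in the introduction is that here one substitutes a Seiberg–Witten obstruction. Concretely, the delicate points will be: (i) carefully setting up the spin$^\cc$ structure on the complement or on a surgered manifold so that the mod-$2$ linking number $V(\gamma)$ appears as a mod-$2$ Seiberg–Witten count or as a characteristic-number congruence, using Theorem \ref{morgan} to ensure the relevant invariant is non-zero; and (ii) verifying that the hypotheses "$L$ nullhomologous mod $2$" and "$\alpha_2=0$" are exactly what is needed for the surgery to preserve the rational-homology-K3 and spin conditions so that Theorem \ref{morgan} applies. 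Once $\beta\neq 0$ is established, choosing the simple loop $\gamma$ dual to it is immediate, and $V(\gamma)=1$ follows from the linear formula above.
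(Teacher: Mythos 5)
Your central structural claim is not correct, and it derails the whole plan. It is not true that $V$ corrected by the mod-$2$ Maslov class becomes a \emph{linear} functional on $H_1(L;\zz_2)$. The actual relation (Nemirovski's formula \eqref{nemirovski} combined with \eqref{Rq}, valid once $\alpha \bmod 2 = 0$) is $V(\gamma) = 1 + q(\gamma) \bmod 2$, where $q$ is a \emph{quadratic refinement} of the mod-$2$ intersection form of $L$ (Freedman--Kirby: $q$ is membrane-independent because $L$ is characteristic, $X$ being spin and $[L]=0 \bmod 2$). In particular your additivity formula is off by the constant: one has $V(\gamma_1+\gamma_2) = V(\gamma_1)+V(\gamma_2)+a_1\cdot a_2 + 1 \bmod 2$, since it is $q$, not $V$, that satisfies the quadratic-refinement identity. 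Consequently there is no class $\beta \in H^1(L;\zz_2)$ with $V(\gamma)=\langle \alpha_2+\beta, a\rangle$, and the reduction of the lemma to ``show $\beta \neq 0$'' does not make sense as stated.

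The second problem is where you look for the decisive input. You defer the ``real content'' to an unproved identification of your (nonexistent) class $\beta$ with Seiberg--Witten data via Theorem \ref{morgan}; but no gauge theory is needed for this lemma, and your sketch of how it would enter is too vague to be checked. The argument is purely topological: since $L$ is characteristic, the Freedman--Kirby (Guillou--Marin/Rokhlin-type) congruence \eqref{arf} gives $\sigma(X)-[L]^2 = 8\,\text{Arf}(X,L) \bmod 16$; with $\sigma(X)=-16$ and $[L]^2=0$ this forces $\text{Arf}(X,L)=0$, so for a symplectic basis of $H_1(L;\zz_2)$ at least one basis class has $q=0$. Representing that class by a simple loop $\gamma$ on the torus and using $V(\gamma)=1+q(\gamma)$ yields $V(\gamma)=1$. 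The Seiberg--Witten results (Morgan--Szab\'o, Taubes) are used only later in the paper, after the Luttinger surgery, to constrain $c_1$ and basic classes of the surgered manifold --- not in the proof of this lemma. So the proposal both rests on a false linearity claim and misplaces the key mechanism; the quadratic structure plus the Arf computation is the missing idea.
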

The proof occupies the present section. 
\smallskip%

$\Lambda^{2}_{\cc}(T_{X})|_{L}$ has a preferred non-vanishing section, which comes from a non-vanishing section of $\Lambda^{2}_{\rr}(T_{L})$ through the isomorphism \eqref{iso}. Let $\sigma$ be a generic extension of this section over the entirety of $\Lambda^{2}_{\cc}(T_{X})$. Define:
\[
\Sigma = \left\{ x \in X\ |\ \sigma(x) = 0 \right\}.
\]
If not empty, $\Sigma$ is a smooth, oriented, embedded surface in $X$ disjoint from $L$. Suppose that $\gamma$ bounds
an embedded (not necessarily orientable) surface $M \subset X$ such that $M$ is nowhere-tangent to $L$ along $\gamma$ and such that the interior of $M$ is transverse to both $L$ and $\Sigma$. We call $M$ a membrane for the loop $\gamma$. Such a surface always exists, since $X$ 
is simply-connected. 
\smallskip%

Let $N_M = T_X/T_M|_M$ be the normal bundle to $M$ in $X$. We define its modulo 2 Euler class, $d(M) \in \mathbb{Z}_2$, relative to the chosen normal vector field $\nu$ along $\gamma = \partial M$. We extend $\nu$ from a non-vanishing section over $\gamma$ to a generic section of $N_M$ over $M$. $d(M)$ is the number of zeros of this extension modulo 2, independent of the extension choice.
\smallskip%

Denote by $\#(M \cap \Sigma)$ and $\#(M \cap L)$ the intersection numbers (interior) of $M$ with $\Sigma$ and $L$, respectively. Define $R(M, \sigma) \in \zz_2$ (Rokhlin index of $M$) and $q(\gamma) \in \zz_2$ as:
\[
R(M, \sigma) = d(M) + \#(M \cap \Sigma) + \#(M \cap L)\,\operatorname{mod}2,\quad
q(\gamma) = d(M) + \#(M \cap L)\,\operatorname{mod}2
\]
Nemirovski's theorem \cite{Nemir-1} relates the Rokhlin index, the Viro index, and a membrane.
\begin{theorem}[Nemirovski, \cite{Nemir-1}]
Let $M$ be a membrane for $\gamma$ on $L$, chosen so the tangent half-space to $M$ along $\gamma$ is spanned by $\dot{\gamma}$ and $J\nu$. Then, with this choice of $M$:  
\begin{equation}\label{nemirovski}
  R(M, \sigma) = 1 + V(\gamma)\ \text{\normalfont{mod}}\ 2.  
\end{equation}
\end{theorem}
\begin{proof}
See Lemma 1.13 in \cite{Nemir-1}. \qed
\end{proof}
\smallskip%

Observe that $\#(M \cap \Sigma)$ coincides with $\alpha(\gamma)$, calculated modulo 2. Therefore, if the modulo 2 reduction of $\alpha$ vanishes, we have the following relationship:
\begin{equation}\label{Rq}
R(M, \sigma) = q(\gamma).
\end{equation}
As pointed out in \cite{Rokh-1}, if $L$ is a characteristic surface (i.e., nullhomologous modulo 2), then $q(\gamma)$ only depends on the homology class of the loop $\gamma$ in $H_1(L; \mathbb{Z}_2)$ and is independent of the specific choice of membrane $M$. Furthermore, $q$ is a quadratic function over $H_1(L; \mathbb{Z}_2)$ with an associated Arf invariant. This Arf invariant can be calculated as 
follows: If $e_1$ and $e_2$ form any basis for $H_1(L; \mathbb{Z}_2)$, then the Arf invariant of $q$, denoted by $\operatorname{Arf}(q)$, is defined as:
\begin{equation}\label{arf-def}
\operatorname{Arf}(q) = q(e_1) q(e_2).
\end{equation}
A detailed discussion of the function $q$ and the associated Arf invariant can be found in \cite{Mat, GM}, as well as in the end-notes of Ch.\,11 in \cite{wild}.
\smallskip%

Rokhlin and Freedman-Kirby established the following congruence:
\begin{equation}\label{arf}
\mathrm{sign}(X) - [L]^2 = 8\,\text{Arf}(q)\,\operatorname{mod} 16.
\end{equation}
A detailed proof of this congruence, along with its background and applications, can be found in \cite{Mat, GM}. If $X$ is homotopy-equivalent to a K3 surface, then $\mathrm{sign}(X) = -16$. Further, if $L$ is totally real, then $[L]^2 = 0$. Plugging these into \eqref{arf} yields $\operatorname{Arf}(q) = 0$. It follows from \eqref{arf-def} that there must be a simple loop $\gamma$ on $L$ with $q(\gamma) = 0$. Lemma \ref{gamma-is} now follows from \eqref{nemirovski} and \eqref{Rq}.

\begin{remark}
Congruence \eqref{arf} admits a generalization to the non-simply-connected case, as discussed in \S\,2.6 in \cite{Deg-It-Kh}. Combining with Theorem A in \cite{Rub-Strle}, it leads to a generalization of Theorem \ref{t:A} applicable to symplectic four-tori (Corollary 9.2 in \cite{Ab-Smith}). However, we will not study this generalization further.
\end{remark}

\section{Luttinger surgery}\label{luttinger}
We assume familiarity with Luttinger surgery (see, e.g., \cite{Aur-Donald-Katz, Eliash-Polt, Mish, Nemir-1, Nemir-2}). 

Let $D$ be a small disk in $\mathbb{R}^2$ with standard Lagrangian torus bundle $\pi \colon D\times T^2 \rightarrow D$ and symplectic form induced by $(T^{*}_{T^{2}}, \omega_0)$. 
Suitable local coordinates $(x,y)$ on $D$ and $(\theta_x, \theta_y)$ on $T^2$ express $\omega_0$ as $d\theta_x \wedge dx + d\theta_y \wedge dy$. Using polar coordinates $(r, \varphi)$ on $D$ with $(x, y) = (r \cos{\varphi}, r \sin{\varphi})$, define a multi-valued function $f_{m,n}(r,\varphi) = m  x \varphi + n y \varphi$, $(m, n) \in \zz^2$, on $N = D - {0}$. Though $f_{m,n}$ is not well-defined on $N$, the symplectomorphism $\psi \colon \pi^{-1}(N) \to \pi^{-1}(N)$ generated by the time-1 Hamiltonian flow of $f_{m,n} \circ \pi$ is well-defined. Denote the $k$th power of $\psi$ as $\psi^k$. 
\smallskip%

Let $L$ be a Lagrangian torus in $(X, \omega)$. A neighborhood $U$ of $L$ is symplectically identified with $\pi^{-1}(D)$ such that $L$ corresponds to $\pi^{-1}(0)$. Choose a basis for $H_1(T^2; \mathbb{Z})$ and a simple loop $\gamma$ on $L$, represented by $(m,n) \in \mathbb{Z}^2$. (Note that if \(\gamma\) is a simple loop, then either \((m, n) = (0, 0)\) or \(m, n\) are coprime.) Luttinger surgery on $X$ along $L$ with respect to $\gamma$ and $k$ (denoted $X(L,\gamma,k)$) is obtained by removing $U$ from $X$ and gluing it back along $U - L$ via $\psi^k$:
\[
X(L,\gamma,k) = 
\left( (X - L) \cup U \right) /\sim,\quad 
\text{$x_1 \in X - L \sim x_2 \in U$ iff $x_1, x_2 \in U - L$ and $\psi^{k}(x_1)= x_2$.}
\]
Since $\psi^k$ is a symplectomorphism, $X(L,\gamma,k)$ carries a natural symplectic form, whose deformation class is independent of the construction choices \cite{Aur-Donald-Katz}. We refer to \cite{Mish}, Example 2, for a description of Luttinger surgery using $f_{m,n}$.
\smallskip%

Let us analyze the effect of Luttinger surgery on $H^1$ and $H^2$, specializing to (homotopy) K3 surfaces. For abbreviation, we denote $X(L,\gamma,k)$ as $\tilde{X}$.
\begin{lemma}\label{b1-null}
If $\pi_1(X)$ is trivial, $L$ is nullhomologous modulo 2 in $X$, and 
$\gamma$ is such that $V(\gamma) = 1$, then there exists $k \in \zz$ such that 
$b_1(\tilde{X}) = 0$ and such that $H^2(\tilde{X};\zz)$ has non-trivial 2-torsion. 
\end{lemma}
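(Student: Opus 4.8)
\emph{Step 1 (homology of the complement).} The plan is to compute $H_1(\tilde{X};\zz)$ explicitly and to read off both statements from it. First I would record that $H_1(X\setminus L;\zz)$ is cyclic, generated by the meridian $\mu$ of $L$. Since the normal bundle of $L$ is trivial and oriented, excision and the Thom isomorphism identify $H_{*}(X,X\setminus L)$ with $H_{*-2}(L)$, so the long exact sequence of the pair $(X,X\setminus L)$ reads
\[
H_2(X;\zz)\ \xrightarrow{\ \cdot\,[L]\ }\ \zz\ \xrightarrow{\ \partial\ }\ H_1(X\setminus L;\zz)\ \longrightarrow\ H_1(X;\zz)=0,
\]
where the first arrow is intersection with $[L]$ and $\partial$ carries a generator to $\mu$. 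Hence $H_1(X\setminus L;\zz)\cong\zz/d\,\zz$, where $d\ge 0$ is the divisibility of $[L]$ in the (torsion free, unimodular) lattice $H_2(X;\zz)$, with $d=0$ precisely when $[L]=0$. As $L$ is nullhomologous modulo $2$, the class $[L]$ is divisible by $2$, so $d$ is even.

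\emph{Step 2 (effect of the surgery).} Next I would invoke the standard description of Luttinger surgery on fundamental groups (see \cite{Aur-Donald-Katz}): $\tilde{X}$ is recovered from $X\setminus L$ by regluing a copy of $D\times T^2$ whose meridian $\partial D\times\{\text{pt}\}$ is attached along the curve $\mu+k\,\gamma'$ in $\partial(X\setminus L)=T^3$, where $\gamma'\subset X\setminus L$ is the Lagrangian push-off of $\gamma$, that is, the curve $\gamma$ transported onto a nearby fibre $\pi^{-1}(v)$, $v\ne 0$, of the model neighbourhood $\pi^{-1}(D)$ of $L$. Passing to $H_1$ (the quotient construction commutes with abelianization), this gives
\[
H_1(\tilde{X};\zz)\ \cong\ H_1(X\setminus L;\zz)\,/\,\langle\,[\mu]+k\,[\gamma']\,\rangle .
\]

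\emph{Step 3 (locating $\gamma'$ via the Viro index).} I would then compute $[\gamma']$ inside $H_1(X\setminus L;\zz)\cong\zz/d\,\zz$, writing $[\gamma']=c\,[\mu]$ for an integer $c$ (defined modulo $d$). The key point is that $\gamma'$ is isotopic, within $X\setminus L$, to the push-off $\gamma^{\#}$ appearing in the definition of the Viro index: in the local model $\pi^{-1}(D)\cong D\times T^2$ an $\omega$-compatible $J$ sends a field $\nu$ normal to $\gamma$ inside $L$ to a vector $J\nu$ which is transverse to $L$ and points along the base $D$, so pushing $\gamma$ in the direction of $J\nu$ moves it onto a nearby fibre $\pi^{-1}(v)$, i.e. onto $\gamma'$. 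Since the isotopy class of $\gamma^{\#}$ in $X\setminus L$ does not depend on the auxiliary choices, we conclude $c\equiv\text{lk}(\gamma^{\#},L)\equiv V(\gamma)=1\pmod 2$, so $c$ is odd.

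\emph{Step 4 (conclusion).} Combining the three steps,
\[
H_1(\tilde{X};\zz)\ \cong\ \zz\,/\,\gcd(d,\,1+kc)\,\zz .
\]
Choose $k$ to be odd — and, in the exceptional case $d=0$ with $c=\pm 1$, distinct from the unique value for which $1+kc=0$. Then $1+kc$ is even and nonzero, and since $d$ is even too, $\gcd(d,1+kc)$ is a positive even integer; hence $H_1(\tilde{X};\zz)\cong\zz/2m\,\zz$ for some $m\ge 1$. Consequently $b_1(\tilde{X})=0$, and by the universal coefficient theorem the torsion subgroup of $H^2(\tilde{X};\zz)$ is isomorphic to $\ext(H_1(\tilde{X};\zz),\zz)\cong\zz/2m\,\zz$, which is non-trivial and contains $2$-torsion. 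The main obstacle is Step 2 together with the identification in Step 3: one must match the regluing framing in Luttinger's construction with the Lagrangian push-off and see that this push-off is exactly the curve whose mod-$2$ linking number with $L$ is the Viro index $V(\gamma)$. With that in hand, the remainder is elementary bookkeeping with cyclic groups.
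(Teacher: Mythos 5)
Your argument is correct, and at its core it is the same cut-and-paste computation as the paper's: identify the image of the push-off of $\gamma$ in the first homology of the complement, observe that $V(\gamma)=1$ makes its meridian coefficient odd, and then choose the surgery parameter $k$ odd so that $H_1(\tilde{X};\zz)$ becomes finite of even order, whence $b_1(\tilde{X})=0$ and $H^2(\tilde{X};\zz)$ acquires $2$-torsion via universal coefficients. The packaging differs slightly, and to your advantage: the paper runs Mayer--Vietoris with the pieces $\tilde{X}-L$, $U$, $U-L$, encodes the gluing in a $3\times 3$ matrix, and splits into the cases $[L]=0$ and $[L]\neq 0$ but even, proving only the first and declaring the second ``similar''; you instead compute $H_1(X\setminus L;\zz)\cong\zz/d\zz$ once and for all from the long exact sequence of the pair (with $d$ the divisibility of $[L]$, even or zero by hypothesis), and present the surgery as imposing the single relation $(1+kc)\mu=0$, which treats both cases uniformly and makes explicit the identification of the Lagrangian push-off with the $J\nu$-push-off defining the Viro index -- a point the paper leaves implicit. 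One cosmetic slip: after choosing $k$ odd you assert $1+kc\neq 0$, but you only arranged this in the case $d=0$; when $d>0$ and $1+kc=0$ the conclusion still holds since $\gcd(d,0)=d$ is positive and even, so nothing is lost.
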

\begin{proof}
Since $\gamma$ is a simple loop representing a primitive class in $H_1(L; \mathbb{Z})$, there exists a simple loop $\beta$ such that their classes form a basis for $H_1(L; \mathbb{Z})$. In $U - L$, pick a loop $\mu$ such that there exists a nullhomotopy of $\mu$ in $U$ intersecting $L$ at a 
single point. We call such a loop a meridian for future reference.  $H_1(U - L; \mathbb{Z}) = \mathbb{Z}^3$ is generated by $\mu, \gamma^{\#}, \beta^{\#}$.
\smallskip%

We consider two cases: (1) nullhomologous $L$ and (2) homologically non-trivial but nullhomologous modulo 2 $L$. We only prove (1), as the argument for (2) is similar.
\smallskip%

We have the following piece of the Mayer–Vietoris sequence:
\begin{equation}\label{M-V-X}
\ldots \to H_1(U - L) \to H_1(X - L) \oplus H_1(U) \to H_1(X) \to 0,
\end{equation}
and a similar sequence for $\tilde{X} - L$ and 
$\tilde{X}$:
\begin{equation}\label{M-V-Xt}
\ldots \to H_1(U - L) \to H_1(\tilde{X} - L) \oplus H_1(U) \to H_1(\tilde{X}) \to 0.
\end{equation}
Since $U - L \subset X - L$, $\mu$ can be considered a loop in $X - L$. Let us show that if $L$ is nullhomologous, then $H_1(X - L; \mathbb{Z}) = \mathbb{Z}$, generated by $\mu$. Any loop $\delta$ in $X - L$ bounds an immersed disk $C$ in $X$. By arranging the intersections of $C$ and $L$ to be transverse and removing small neighborhoods of the intersection points, we obtain a sphere with holes $C' \subset C$ that can be isotoped into $X - L$. The boundary of $C'$ consists of $\delta$ and several meridians homologous to $\mu$, implying $\delta$ is homologous to a multiple of $\mu$. To show $\mu$ is non-trivial in $X - L$, assume it is nullhomologous. Then, a surface $C$ in $X - L$ bounding $\mu$, when joined with a nullhomotopy of $\mu$ in $U$, would intersect $L$ once. This is impossible since $L$ is nullhomologous.
\smallskip%

If we choose $\mu, \gamma^{\#}, \beta^{\#}$ as a basis for $H_1(U - L)$, $\mu$ as a generator for $H_1(X - L)$, and $\gamma, \beta$ as a basis for $H_1(U)$, then the homomorphism $H_1(U - L) \to H_1(X - L) \oplus H_1(U)$ in \eqref{M-V-X} is the identity matrix. The homomorphism $H_1(U - L) \to H_1(\tilde{X} - L) \oplus H_1(U)$ in \eqref{M-V-Xt} is given by the matrix:
\begin{equation}\label{pq}
\begin{pmatrix}
    1 &   p &   q  \\
    k &   1 &   0  \\
    0 &   0 &   1
\end{pmatrix},\quad 
\text{where $p = V(\gamma)\operatorname{mod} 2$ and $q = V(\beta)\operatorname{mod} 2$.}
\end{equation}
The rank of $H_1(\tilde{X}; \mathbb{Z})$ is the corank of this matrix. Choose $k$ such that the above matrix has non-vanishing determinant. On the other hand, (the absolute value of) that determinant is exactly the order of $H_1(\tilde{X};\zz)$. Choosing $k$ odd, we arrange so that the order of $H_1(\tilde{X};\zz)$ is even. This can be done 
because $p = 1\operatorname{mod} 2$. But then 
there must be elements of order 2 in $H_1(\tilde{X};\zz)$, and hence in $H^2(\tilde{X};\zz)$. \qed
\end{proof}
\smallskip%

\begin{lemma}\label{b1-nullagain}
If $\pi_1(X)$ is trivial and $L$ has non-trivial homology class, then $b_1(\tilde{X}) = 0$.
\end{lemma}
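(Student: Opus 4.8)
The plan is to rerun the Mayer--Vietoris computation of Lemma \ref{b1-null}; the point is that non-triviality of $[L]$ forces $H_1$ of the torus complement to be finite, after which the rank of $H_1(\tilde{X})$ is governed only by the (purely local) inclusion $U-L\hookrightarrow U$. Note first that $H_2(X;\zz)\cong\zz^{22}$ is torsion-free, so the hypothesis $[L]\neq 0$ means $[L]$ is non-torsion, hence non-zero already in $H_2(X;\qq)$.

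First I would compute $H_1(X-L;\zz)$. Since $L$ is Lagrangian its normal bundle is trivial, so $U\cong D\times T^2$ and $U-L\simeq T^3$; by excision and the Thom isomorphism $H_k(X,X-L)\cong H_k(U,U-L)\cong H_{k-2}(L)$, and in particular $H_2(X,X-L)\cong\zz$, with the map $H_2(X)\to H_2(X,X-L)\cong\zz$ in the exact sequence of the pair given by intersection with $[L]$. As $\pi_1(X)=0$ gives $H_1(X)=0$, this sequence reads $H_2(X)\to\zz\to H_1(X-L)\to 0$; and because the intersection form of $X$ is non-degenerate over $\qq$ while $[L]$ is non-zero rationally, some integral class meets $[L]$ non-trivially, so the first arrow is non-zero. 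Hence $H_1(X-L;\zz)$ is finite cyclic. Luttinger surgery is carried out inside $U$ and does not alter the diffeomorphism type of the complement of the torus — only the framing with which $U$ is glued back changes — so $H_1(\tilde{X}-L;\zz)\cong H_1(X-L;\zz)$ is finite as well, i.e.\ $b_1(\tilde{X}-L)=0$.

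Finally I would feed this into the Mayer--Vietoris sequence
\[
H_1(U-L)\longrightarrow H_1(\tilde{X}-L)\oplus H_1(U)\longrightarrow H_1(\tilde{X})\longrightarrow 0,
\]
where the last arrow is surjective because $H_0(U-L)\to H_0(\tilde{X}-L)\oplus H_0(U)$ is injective. Tensoring with $\qq$ kills $H_1(\tilde{X}-L)$, so it suffices that $H_1(U-L;\qq)=\qq^3\to H_1(U;\qq)=\qq^2$, the map induced by $U-L\hookrightarrow U$, be onto; but $U-L$ retracts onto $\partial U=S^1\times T^2$, and the inclusion $S^1\times T^2\hookrightarrow D\times T^2=U$ induces on $H_1$ the projection $\zz\oplus\zz^2\to\zz^2$ killing the meridian class. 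Therefore $H_1(\tilde{X};\qq)=0$, that is, $b_1(\tilde{X})=0$. The only step needing a word of care is the identification $H_1(\tilde{X}-L)\cong H_1(X-L)$, i.e.\ that gluing $U$ back via $\psi^k$ leaves the complement of the torus undisturbed; this is the same observation already used tacitly in Lemma \ref{b1-null} (it holds because $\psi^k$ is supported in $U$ and $U-L$ deformation retracts onto $\partial U$). Everything else is a short diagram chase, with $[L]\neq 0$ entering solely to make $H_1(X-L)$ finite.
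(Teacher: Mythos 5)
Your argument is correct and is essentially the paper's own proof: the paper disposes of this lemma by tensoring the Mayer--Vietoris sequence \eqref{M-V} with $\rr$, and your write-up simply makes explicit the two facts that make this work, namely that $[L]\neq 0$ forces $H_1(X-L)\cong H_1(\tilde{X}-L)$ to be finite (via excision/Thom and the unimodular intersection form) and that $H_1(U-L;\qq)\to H_1(U;\qq)$ is onto. No substantive difference in approach.
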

\begin{proof}
We analyze the Mayer-Vietoris sequence \eqref{M-V-Xt} with rational coefficients. To show $b_1(\tilde{X}) = 0$, it suffices to prove the surjectivity of $H_1(U - L) \to H_1(\tilde{X} - L) \oplus H_1(U)$. From the proof of Lemma \ref{b1-null} we 
establish that $H_1(\tilde{X} - L)$ is generated by $\mu$. Since $L$ has non-trivial homology, \Poincare duality guarantees an immersed surface $C$ with non-zero intersection number with $L$. By arranging $C$ to be transverse to $L$ and removing small neighborhoods of intersections, we obtain a sphere with holes $C' \subset C$ whose boundary consists of meridians homologous to $\mu$. This implies that a multiple of 
$\mu$ is nullhomologous in 
$\tilde{X} - L$. Since the Mayer-Vietoris sequence \eqref{M-V-Xt} maps $\gamma^{\#}, \beta^{\#}$ to $\gamma, \beta$ in $H_1(U)$, it follows that $H_1(U - L)$ surjects onto $H_1(U)$.\qed
\end{proof}
\smallskip%

Let us examine the effect of Luttinger surgery on the Chern class and Euler characteristic.
\begin{lemma}\label{euler}
$\chi(X) = \chi(\tilde{X})$. 
\end{lemma}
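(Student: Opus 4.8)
The plan is to regard Luttinger surgery as a cut-and-paste operation along a $3$-torus and then to use the additivity of the Euler characteristic. First I would fix a closed tubular neighbourhood $V\subset U$ of $L$, so that $V\cong D^2\times T^2$ with $\partial V\cong T^3$, and set $W=X\setminus\mathrm{int}(V)$, a compact $4$-manifold with $\partial W\cong T^3$. Then $X=W\cup_{\mathrm{id}}V$, whereas, by the very description of the construction in Section~\ref{luttinger} (see also \cite{Aur-Donald-Katz}), $\tilde X$ is assembled from the \emph{same} two pieces $W$ and $V$, glued along their common boundary $T^3$ by the self-diffeomorphism of $T^3$ induced by $\psi^k$. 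Thus $W$, $V$ and the gluing locus $T^3$ are diffeomorphic in the two cases; only the identification along $T^3$ changes.

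Next I would invoke the inclusion--exclusion formula for the Euler characteristic: if a closed manifold $M$ decomposes as $A\cup B$ with $A\cap B$ a collar of the separating hypersurface $N$, then $\chi(M)=\chi(A)+\chi(B)-\chi(N)$. Applied to the two decompositions above, this gives
\[
\chi(X)=\chi(W)+\chi(V)-\chi(T^3)=\chi(\tilde X),
\]
since the right-hand side sees only the pieces, not how they are glued. As $T^3$ is a closed odd-dimensional manifold one has $\chi(T^3)=0$ (and in fact $\chi(V)=\chi(D^2)\,\chi(T^2)=0$ as well, so both sides even equal $\chi(W)$); but the mere cancellation of the two identical expressions is all the lemma requires.

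I expect no genuine obstacle here. The only points that need attention are (i) confirming that $\tilde X$ really is obtained by regluing $W$ to $V$ along $T^3$, which is precisely how Luttinger surgery was set up above, and (ii) recalling that $\chi$ is additive under gluing along a closed $3$-manifold. One could instead deduce the equality by comparing all the Betti numbers through the Mayer--Vietoris sequence \eqref{M-V}, but the cut-and-paste argument is shorter and involves no homological bookkeeping.
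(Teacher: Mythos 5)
Your argument is correct and rests on the same principle as the paper's one-line proof, namely that the Euler characteristic is additive over the two surgery pieces and is blind to the gluing identification; the paper phrases this via the Mayer--Vietoris sequence for the open pieces $U$ and $X-L$, while you phrase it as inclusion--exclusion for the compact pieces $V$ and $W$ glued along $T^3$. This is only a cosmetic repackaging, so no further comment is needed.
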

\begin{proof}
$\chi(X)$ depends only on the ranks of $H_k(U)$, $H_k(X - L)$, $H_k(U - L)$, not on the homomorphisms encountered in the Mayer–Vietoris sequence. Since $\tilde{X} - L$ coincides with 
$X - L$, it follows that 
$H_k(\tilde{X} - L)$ coincide with $H_k(X - L)$. The groups 
$H_k(U)$ are the same for both sequences. Thus, $\chi(\tilde{X}) = \chi(X)$.
\qed
\end{proof}
\smallskip%

\begin{lemma}\label{c1}
If $c_1(X) = 0$, then $
c_1(\tilde{X}) = -k\, \alpha(\gamma)\,[L] \in H^2(\tilde{X};\zz)
$, where $\alpha$ is the Maslov class of $L$ in $X$ 
and $[L]$ is the dual to $L$ in $\tilde{X}$ (i.e., $[L]$ is the Chern class 
of the complex line bundle with divisor $L$.) Note that the orientation of $L$ does not appear in this formula because it implicitly appears in our definition of Luttinger surgery.
\end{lemma}
\begin{proof}
This proof is derived from 
\cite[\S\,2.2]{Aur-Donald-Katz}. 
Let $\operatorname{ker} d\pi$ denote the vertical tangent bundle for 
$\pi \colon U \to D^2$. Since $\pi \circ \psi = \pi$, it follows that $\psi$ induces a bundle 
morphism $\psi_* \colon \Lambda^2_{\rr}( \operatorname{ker} d\pi ) \to \Lambda^2_{\rr}( \operatorname{ker} d\pi )$. Using the description of $\psi$ in terms of $f_{m,n}$, one finds a non-vanishing section $\sigma$ of $\Lambda^2_{\rr}( \operatorname{ker} d\pi )$ such that $\psi_{*} \sigma = \sigma$. Explicitly, if we let $v_{x}$ and $v_y$ be the Hamiltonian 
vector fields for the functions $x$ and $y$, respectively, then we can set 
$\sigma = v_{x} \wedge v_y$. Both $v_{x}$ and $v_y$ belong to $\operatorname{ker} d\pi$. Moreover, since $\psi$ preserves $x$ and $y$ and is a symplectomorphism, it also preserves $v_x$ and $v_y$.
\smallskip%

Choose an almost-complex structure $J$ on $X$. Let $s$ be a 
section of $\Lambda^2_{\cc} (T_{X})$ over the whole of $X$. Since $\operatorname{ker} d\pi$ is an orientable Lagrangian plane field over $U$, we have a canonical isomorphism 
\begin{equation}\label{ker}
    \Lambda^2_{\rr}( \operatorname{ker} d\pi ) \otimes \cc \cong 
\Lambda^2_{\cc} (T_{X})|_{U}.
\end{equation}
By \eqref{ker}, $\sigma$ induces a section of $\Lambda^2_{\cc} (T_{X})|_{U}$, which we shall also denote by $\sigma$. Then, over $U - L \subset X$, we have $s = g \cdot \sigma$, where $g \colon U - L \to \cc^{*}$. Letting $\delta \in H^1(U - L;\zz)$ be the cohomology class of the mapping $g$, we have $\delta(\gamma^{\#}) = \alpha(\gamma)$, 
$\delta(\beta^{\#}) = \alpha(\beta)$, and $\delta (\mu) = 0$. Here $\mu, \gamma^{\#}, \beta^{\#}$ are defined as in the proof of Lemma \ref{b1-null}.
\smallskip%

Using the identification $\tilde{X} - L \cong X - L$, we endow 
$\tilde{X} - L$ with the almost-complex structure $J$. Now, consider 
the subset $U - L \subset \tilde{X}$. Let $\psi^{k}_{*} s$ represent 
a section of $\Lambda^2_{\cc} (T_{X})|_{U-L}$, understood with respect to the structure $\psi^{k}_{*}J$. This section is obtained through the pushforward of the section $s$ under $\psi^{k}$, and it satisfies the relation:
\[
\psi^{k}_{*} s = (g \circ \psi^{-k}) \cdot \sigma.
\]
In this formula, $\sigma$ is the section of $\Lambda^2_{\cc} (T_{X})|_{U-L}$, obtained using the almost-complex structure $\psi^{k}_{*} J$ and the isomorphism \eqref{ker}. Additionally, $(g \circ \psi^{-k})$ denotes the pushforward of the function $g$ under $\psi^k$.
\smallskip%

Observe that $\psi^{k}_{*} J$ does not extend to cover the entirety of $U$. 
To fix this, let us consider a tubular neighborhood $V \subset U$ of $L$, which is strictly smaller than $U$. Choose an almost-complex structure $J'$ over the entire region $U$, such that $J' = \psi^{k}_{*} J$ holds on $U - V$. Let $s'$ be a section of $\Lambda^2_{\cc} (T_{X})|_{U-L}$, understood with respect to the structure $J'$, that is chosen in such a way that $s = s'$ on $U-V$. (Since $U$ and $V$ are homotopy-equivalent, it follows that such an extension of $s|_{U - V}$ exists and is unique up to homotopy.) Now, let $\sigma'$ be the section of $\Lambda^2_{\cc} (T_{X})|_{U}$ 
obtained using the almost-complex structure $J'$ and the isomorphism \eqref{ker}. Then, over $U - L \subset \tilde{X}$, we have 
$s' = g' \cdot \sigma'$. The homotopy class of $g'$ is equal to that of $g \circ \psi^{-k}$ since $g \circ \psi^{-k}= g'|_{U-V}$. Letting $\delta' \in H^1(U - L;\zz)$ be the cohomology class of the mapping $g'$, we have $\delta'(\gamma^{\#}) = \alpha(\gamma)$, 
$\delta'(\beta^{\#}) = \alpha(\beta)$, and $\delta' (\mu) = -k\, \alpha(\gamma)$.
\smallskip%

Therefore, over $U - L \subset \tilde{X}$, we have $s' = (g'/g) \cdot (g \cdot \sigma')$, where $\sigma'$ is a section of $\Lambda^2_{\cc} (T_{X})$ over $U$, and $s'$ is a section of $\Lambda^2_{\cc} (T_{X})$ over $\tilde{X} - L$. The class of the mapping $g'/g$ becomes equal to $\delta' - \delta \in H^1(U - L;\zz)$, and it satisfies $[\delta' - \delta](\mu) = -k\, \alpha(\gamma)$ and 
$[\delta' - \delta](\gamma^{\#}) = [\delta' - \delta](\beta^{\#}) = 0$. This completes the proof. 
\qed
\end{proof}
\smallskip%

\begin{lemma}\label{sigma}
$\operatorname{sign}(X) = \operatorname{sign}(\tilde{X})$. 
\end{lemma}
\begin{proof}
Novikov's additivity theorem implies that if we glue together two compact oriented 4-manifolds along a connected component of their boundaries, the signature of the resulting manifold does not depend on the choice of the gluing map. For the proof, see the end-notes of Ch.\,4 in \cite{wild}. \qed
\end{proof}
\medskip%

Luttinger surgery can be extended to Lagrangian Klein bottles. Define $h \colon U \to U$ as:
\[
h(\theta_x, \theta_y, x, y) = 
(-\theta_x + \pi, \theta_y, -x, y).
\]
The map $h$ has the following properties: (1) $h$ is a fixed-point free involution; (2) $h^* \omega_0 = \omega_0$; (3) $h^*x = -x$, $h^*y = y$, and $h$ maps the torus $L$ to itself. Define $K$ and $U_{K}$ as:
\[
K = L/\sim,\quad 
\text{$x_1 \in L \sim x_2 \in L$ iff $h(x_1)= x_2$;}
\]
\[
U_{K} = U/\sim,\quad 
\text{$x_1 \in U \sim x_2 \in U$ iff $h(x_1)= x_2$.}
\]
$K$ is the resulting Lagrangian Klein bottle, and $U_{K}$ is symplectomorphic to a tubular neighbourhood of an embedded Lagrangian Klein bottle in a symplectic 4-manifold.
\smallskip%

Setting $m = 0$ and $n = 1$ in $f_{m, n}$, we let $\psi: U \to U$ be the associated symplectomorphism. Since $h$ commutes with $f_{0, 1}$, there exists a symplectomorphism $\psi_K$ such that the following diagram commutes:
\[
\begin{tikzcd}
U \arrow{d}[swap]{/h} \arrow{r}{\psi} & U \arrow{d}{/h}\\
U_{K} \arrow{r}{\psi_{K}} & U_{K}.
\end{tikzcd}
\]
Here, the vertical arrows identify $x$ and $h(x)$. If $L$ is an embedded Lagrangian Klein bottle in a symplectic manifold $(X, \omega)$, then a tubular neighborhood of $K$ is symplectomorphic to $U_K$. We define $X(K)$ as the symplectic manifold obtained by performing Luttinger surgery on $X$ with respect to $K$. This involes removing $U_{K}$ from $X$ and gluing it back along $U_{K} - K$ via $\psi_{K}$:
\[
X(K) = \left( (X - K) \cup U_{K} \right) /\sim,\quad 
\text{$x_1 \in X - K \sim x_2 \in U_{K}$ iff $x_1, x_2 \in U_{K} - K$ and $\psi_K(x_1) = x_2$.}
\]
Since $\psi_K$ is a symplectomorphism, $X(K)$ carries a natural symplectic form.
\smallskip%

Both Lemma \ref{euler} and Lemma \ref{sigma} hold for Luttinger surgery on a Klein bottle, and will be used in the proof of Theorem \ref{klein_remain} below. Lemma \ref{euler} only relies on the algebraic definition of the Euler characteristic, and Lemma \ref{sigma} on Novikov's theorem. 

\section{Proof of Theorem \ref{t:A}}\label{proof} 
Let us first prove (1). We proceed by contradiction. Assume $L$ is not a Maslov-zero torus but has a non-trivial homology class. Since $L$ is not Maslov-zero, there exists a simple loop $\gamma$ on $L$ with $\alpha(\gamma) \neq 0$. We perform Luttinger surgery on $X$ with respect to $L$, $\gamma$, and $k = 1$ (denoted by $\tilde{X} = X(L,\gamma,1)$). It follows from Lemma \ref{c1} that 
\[ 
c_1(\tilde{X}) = -\alpha(\gamma)\,[L]. 
\]
Let us show that \(\tilde{X}\) is a rational homology K3. Lemma \ref{b1-nullagain} implies $b_1(\tilde{X}) = 0$ since $L$ has a non-trivial homology class. Since $\tilde{X}$ is closed, $b_1(\tilde{X}) = 0$ implies $b_3(\tilde{X}) = 0$. Using Lemma \ref{euler}, we get $\chi(X) = \chi(\tilde{X})$, leading to $2 + b_2(X) = 2 + b_2(\tilde{X})$. This implies the rational Betti numbers of $\tilde{X}$ match those of $X$. Lemma \ref{sigma} then gives $b^{+}_{2}(\tilde{X}) = b^{+}_{2}(X)$ and $b^{-}_{2}(\tilde{X}) = b^{-}_{2}(X)$.
\smallskip%

Let us show that $c_1(\tilde{X}) = 0$. Taubes' theorem states that 
\[
\operatorname{SW}(\fr{s}_{-c_1(X)}) = 1.
\] 
Since \( L \) is Lagrangian, \( c_1(\tilde{X}) \cdot [\tilde{\omega}] = 0 \). Therefore, by Taubes' theorem, \( c_1(\tilde{X}) = 0 \). 
\smallskip%

Since $c_1(\tilde{X}) = 0$, $\tilde{X}$ is spin. Hence, the intersection form of \(\tilde{X}\) must be even and isomorphic to \( 3H \oplus (-2E_8) \) (see Theorem 5.3 in \cite{Milnor1973}). Since \( c_1(\tilde{X}) = 0 \), it follows that \([L] \in H^2(\tilde{X};\mathbb{Z})\) is a torsion class. Using the three copies of \( H \), we can find a triplet of surfaces \( C_1, C_2, C_3 \) within \(\tilde{X}\), each with a positive self-intersection number, and such that they are pairwise disjoint and disjoint from \( L \). Given the natural identification between \(\tilde{X} - L\) and \(X - L\), and the fact that \( C_i \subset \tilde{X} - L \), we can also view the surfaces \( C_i \) as surfaces in \(X\). The following properties of \([C_i]\) and \([L]\), as elements in \( H_2(X;\mathbb{Z}) \), are immediate:
\[ [L]^2 = 0, \quad [C_i]^2 > 0, \quad [C_i] \cdot [L] = 0 \ \text{for } i = 1,2,3, \quad \text{and} \quad [C_i] \cdot [C_j] = 0 \ \text{whenever } i \neq j. \]
However, such a configuration of non-zero cycles cannot be realized in a space with signature \((3,19)\).
\smallskip%

Let us now prove (2). We proceed by contradiction again. Assume $L$ is nullhomologous modulo 2, but the modulo 2 reduction of $\alpha$ vanishes. From Lemma \ref{b1-null}, there exists a Luttinger surgery result $\tilde{X} = X(L,\gamma,k)$ such that $b_1(\tilde{X}) = 0$ and $H^2(\tilde{X};\mathbb{Z})$ has 2-torsion. Similar to part (1), Lemma \ref{c1} gives $c_1(\tilde{X}) = -k \alpha(\gamma) [L]$, and arguments like before establish that $\tilde{X}$ is a rational homology K3 with $c_1(\tilde{X}) = 0$.
\smallskip%

Let $\varepsilon \in H^2(\tilde{X};\mathbb{Z})$ be a non-trivial 2-torsion class. Since $c_1(\tilde{X}) = 0$ and $\varepsilon$ has order 2, the spin$^\cc$ structure $\mathfrak{s}_{\varepsilon}$ has a trivial determinant line bundle. Using the Morgan-Szab{\'o} theorem, we conclude that  
\[ \operatorname{SW}(\mathfrak{s}_{\varepsilon}) \neq 0. \]
However, this contradicts Taubes' theorem. Since $\varepsilon$ has order 2, it follows that $\varepsilon \cdot [\omega] = 0$.  Taubes' theorem states that such $\varepsilon$ must be zero.

\section{On Lagrangian Klein bottles}\label{klein}

Congruence \eqref{arf} admits a generalization to unoriented characteristic surfaces. See the papers \cite{GM} and \cite{Mat}, both in the same volume, 
edited by Guillou and Marin. Combining with the result of 
Morgan-Szab{\'o}, it leads to the following Klein bottle version of Theorem \ref{t:A}: 
\begin{theorem}\label{klein_remain}
Let $(X,\omega)$ be a symplectic 
4-manifold homotopy-equivalent to a complex K3 surface, and suppose $c_1(X) = 0$. An embedded Lagrangian Klein bottle $K \subset (X,\omega)$, if exists, must have 
non-trivial homology class.
\end{theorem}
\begin{proof}
Let us show that if $\tilde{X}$ is obtained 
from $X$ via Luttinger surgery along a Lagrangian Klein bottle $K$, regardless of whether $K$ is nullhomologous or not, then $\tilde{X}$ is a rational homology K3. 
\smallskip

To this end, let us first show that $b_1(\tilde{X}) = 0$. Consider a loop $\delta$ in $\tilde{X}$. Perturb it if necessary to ensure it is disjoint from $K$. Denote $2\delta$ as a loop homotopic to a double of $\delta$. We will show that $2\delta$ is homologically trivial in $\tilde{X} - K$ (equivalent to $X - K$). Since $X$ is simply-connected, there exists an orientable immersed surface $C \subset X$ such that $\partial C = 2 \delta$. Arrange $C$ to be transverse to $K$ in $X$. This ensures an even number of intersection points between $C$ and $K$. Using Gompf's argument (Lemma 4.10 in \cite{B-L-W}), we can replace $C$ with another orientable surface $C'$ such that $\partial C' = 2\delta$ but $C'$ is disjoint from $K$. This modification shows that $2 \delta$ is nullhomologous in $X - K$, hence in $\tilde{X}$. We have established that $b_1(\tilde{X}) = b_1(X)$.
\smallskip%

Lemma \ref{euler} applies to Luttinger surgery with Klein bottles, giving $b_2(\tilde{X}) = b_2(X)$. Lemma \ref{sigma} also applies to Luttinger surgery with Klein bottles, resulting in $b^{+}_2(\tilde{X}) = b^{+}_2(X)$ and $b^{-}_2(\tilde{X}) = b^{-}_2(X)$. We have established that $\tilde{X}$ is a rational homology K3.
\smallskip%

Let us show that $c_1(\tilde{X}) = 0$. Since $c_1(X) = 0$, then over $\tilde{X} - K$, $\Lambda^2_{\cc} (T_{\tilde{X}})$ 
has a nowhere-vanishing section $s$. $K$ is a homotopy retract of $U_K$, and a complex line bundle over a non-orientable surface like $K$ must be trivial. Therefore, $\Lambda^2_{\cc} (T_{\tilde{X}})$ is trivial over $U_K$ and is trivialised over $\partial U_K$ by $s$. The obstruction to extending $s$ from $\partial U_K$ to the entire $U_K$ lies in $H^1(U_K, \partial U_K)$. Let us show that this group is trivial. To begin with, consider the long exact sequence for $(U_K, \partial U_K)$:
\[
\ldots \longrightarrow 
H^{0}(U_K) \longrightarrow  
H^{0}(\partial U_K) \longrightarrow
H^1(U_K, \partial U_K) \longrightarrow
H^{1}(U_K) \longrightarrow 
H^{1}(\partial U_K) \longrightarrow 
\ldots
\]
Here, the coefficients are to be taken in \( \mathbb{Z} \). It suffices to show that \( H^0(U_K) \to H^0(\partial U_K) \) is surjective and \( H^1(U_K) \to H^1(\partial U_K) \) is injective. The first claim follows since \( \partial U_K \) is connected. To prove the second, view \( U_K \) as a normal bundle to \( K \) in \( X \), with \( K \) as the zero section, and introduce another section \( K_1 \) as follows. Recall that $K$ being totally real implies \( T_K \cong U_K \). Since $T_{K}$ admits a non-vanishing section, so does $U_K$. Denote this section as $K_1$, and arrange that it lies entirely in $\partial U_K$. Consider the sequence of maps:
\[
K_1 \longrightarrow \partial U_K \longrightarrow U_K \longrightarrow K,
\]
where $U_K \longrightarrow K$ is a deformation retraction, and the rest are inclusions. Suppose \( \varphi \in H^1(U_K) \) is non-trivial but becomes trivial in \( H^1(\partial U_K) \). Then \( \varphi \) also vanishes when restricted to \( K_1 \). Since \( K_1 \) is isotopic to \( K \) in \( U_K \), it follows that \( \varphi \) must also vanish on \( K \). However, this leads to a contradiction because \( U_K \) retracts onto \( K \), inducing an isomorphism in cohomology. Hence, the map \( H^1(U_K) \to H^1(\partial U_K) \) is indeed injective.
\smallskip%

Corollary 2.3 in \cite{Nemir-1} states that Luttinger surgery along a nullhomologous Lagrangian Klein bottle would result in a $\tilde{X}$ with 
$H_1(\tilde{X};\zz_2) = \zz_2$. However, we have already seen that the free part of $H_1(\tilde{X};\zz)$ 
vanishes. Combining these results implies that $H_1(\tilde{X};\zz)$ must have 2-torsion elements. It follows from the universal coefficient theorem that 
$H^2(\tilde{X};\zz)$ also has 2-torsion elements. We have also established that $c_1(\tilde{X}) = 0$. This leads to a contradiction with the results of Taubes and Morgan-Szab{\'o} in the same way as in the proof of Theorem \ref{t:A}. \qed
\end{proof}

\bibliographystyle{plain}
\bibliography{references}

\end{document}